\newtheorem{theorem}{Theorem}[section]
\newtheorem{lem}[theorem]{Lemma}
\newtheorem*{rem}{Remark}
\numberwithin{equation}{section}
\newcommand{\e}{\varepsilon}
\renewcommand{\o}{\omega}
\renewcommand{\leq}{\leqslant}
\renewcommand{\geq}{\geqslant}
\renewcommand{\d}{\delta}
\renewcommand{\pmod}[1]{\allowbreak\mkern7mu({\operator@font mod}\,\,#1)}
\renewcommand{\o}{\omega}
\renewcommand{\leq}{\leqslant}
\renewcommand{\geq}{\geqslant}
\newcommand{\M}{{\mathcal{M}}}
\newcommand{\I}{{\mathbb I}}
\renewcommand{\d}{{\delta}}
\renewcommand{\leq}{\leqslant}
\renewcommand{\geq}{\geqslant}
\renewcommand{\o}{\omega}
\numberwithin{equation}{section}
\begin{document} 

\title{Karatsuba's divisor problem and related questions}
\author{Mikhail R. Gabdullin, Vitalii V. Iudelevich, Sergei V. Konyagin}
\date{}
\address{Steklov Mathematical Institute,
Gubkina str., 8, Moscow, Russia, 119991}
\email{gabdullin.mikhail@yandex.ru}
\address{Moscow State University, Leninskie Gory str., 1, Moscow, Russia, 119991}
\email{vitaliiyudelevich@mail.ru}
\address{Steklov Mathematical Institute,
	Gubkina str., 8, Moscow, Russia, 119991}
\email{konyagin@mi-ras.ru}

\begin{abstract}
We prove that
$$
\sum_{p \leq x} \frac{1}{\tau(p-1)} \asymp \frac{x}{(\log x)^{3/2}} \quad \mbox{\text{and}} \quad \sum_{n \leq x} \frac{1}{\tau(n^2+1)} \asymp \frac{x}{(\log x)^{1/2}},
$$ 
where $\tau(n)=\sum_{d|n}1$ is the number of divisors of $n$, and the summation in the first sum is over primes.
\end{abstract}

\date{\today}
\maketitle

\section{Introduction}

In 2004, A.~A. Karatsuba in his seminar ``Analytic number theory and applications'' posed the following problem: find the asymptotics for the sum
$$
\Phi (x) = \sum_{p \leq x} \frac{1}{\tau(p-1)}
$$
as $x\to\infty$, where $\tau(n) = \sum_{d|n} 1$ is the divisor function and the summation is over primes not exceeding $x$. This is a natural ``hybrid'' of the following two classical problems from analytic number theory.

The first one (so-called Titchmarsh divisor problem) is asking for the asymptotic behaviour of the sum 
$$
D(x) = \sum_{p\leq x}\tau(p-1).
$$
It is well-known (see \cite{Titchmarsh}, \cite{Bombieri}, \cite{Linnik}) that
$$
D(x) \sim \frac{\zeta(2)\zeta(3)}{\zeta(6)}\,x, \quad\quad x\to\infty,
$$
where $\zeta(s)$ denotes the Riemann zeta-function. The other problem is to find the asymptotics for the sum 
$$
T(x) = \sum_{n\leq x}\frac{1}{\tau(n)}.
$$
This problem was solved by S.~Ramanujan \cite{Ram1916}, who showed that
\begin{equation}\label{1.1}
T(x) = c_0\frac{x}{\sqrt{\log x}}\left( 1+O\left(\frac{1}{\sqrt{\log x}} \right)\right) ,
\end{equation}
where
$$
c_0 = \frac{1}{\sqrt{\pi}} \prod_{p} \sqrt{p^2 - p} \log \frac{p}{p-1} = 0.5486\ldots
$$

The sum $\Phi(x)$ has been studied before. In the recent work \cite{Iud2022} it was shown that
$$
\Phi(x)\leq 4K\dfrac{x}{(\log x)^{3/2}}+O\left( \frac{x\log\log x}{(\log x)^{5/2}}\right),$$
where
$$
K=\frac{1}{\sqrt{\pi}}\prod_{p}\sqrt{\dfrac{p}{p-1}}\left(p\log\dfrac{p}{p-1} -\dfrac{1}{p-1}\right) = 0.2532\ldots 
$$
Note that the bound $\Phi(x)\ll \frac{x}{(\log x)^{3/2}}$ follows from Corollary 1.2 of \cite{Pol2020}, and can also be derived by arguing similarly to the proof of upper bound in Theorem \ref{th1.2} below. We make the conjecture that
$$
\Phi(x) \sim K\frac{x}{(\log x)^{3/2}}, \quad\quad x\to\infty,
$$
but this is probably hard to show.

In this paper, we prove that the aforementioned upper bound for $\Phi(x)$ is sharp up to a constant.  

\begin{theorem}\label{th1.1}
We have
$$
\Phi (x) \gg \frac{x}{(\log x)^{3/2}}.
$$
\end{theorem}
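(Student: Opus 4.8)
The plan is to isolate a large prime factor of $p-1$, which simultaneously makes the divisor count $\tau(p-1)$ explicit and caps the size of the moduli that will later be fed into an equidistribution theorem. Heuristically the main contribution to $\Phi(x)$ comes from primes $p$ with $\omega(p-1)\approx\tfrac12\log\log x$; prescribing all prime factors of such a $p-1$ directly is hopeless, since the resulting modulus has size $\asymp x$, far beyond the reach of Bombieri--Vinogradov. Instead I would retain only primes $p\le x$ of the shape $p-1=2mP$, where $P>\sqrt{x}$ is prime and $m$ is odd and squarefree. For such $p$ the prime $P$ divides $p-1$ exactly once (as $P^2>x\ge p-1$), so $p-1$ is squarefree and $\tau(p-1)=2^{\omega(m)+2}$, whence $1/\tau(p-1)=\tfrac14\,2^{-\omega(m)}$. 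The decisive gain is that $m=(p-1)/(2P)<\sqrt{x}$, so the only moduli that occur are of size at most $\sqrt{x}$.

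Grouping these primes according to $m$ gives the lower bound
$$\Phi(x)\gg \sum_{\substack{m\le \sqrt{x}/2\\ m\ \text{odd, squarefree}}} 2^{-\omega(m)}\, N(x;m),\qquad N(x;m)=\#\{\,\sqrt{x}<P\le x/(2m):\ P\ \text{and}\ 2mP+1\ \text{both prime}\,\}.$$
Each $N(x;m)$ is a prime-pair count, hopeless for a fixed $m$ because of the parity obstruction; but the average over $m\le\sqrt{x}$ is precisely of the type handled in the resolution of the Titchmarsh divisor problem. Detecting the primality of the cofactor $(p-1)/m$ by $\Lambda$ and summing over $m$ reduces the right-hand side to a correlation sum to which the Bombieri--Vinogradov theorem applies (all moduli being at most $\sqrt{x}$), yielding, on average over $m$, the expected main term $N(x;m)\gg \mathfrak{S}(m)\,x/(\phi(2m)(\log x)^2)$ with total error $\ll x(\log x)^{-A}$; here $\mathfrak{S}(m)\asymp\prod_{\ell\mid m}\tfrac{\ell-1}{\ell-2}$ is the singular series of the pair. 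Since the summands $2^{-\omega(m)}\mathfrak{S}(m)/\phi(2m)$ are nonnegative, the error is negligible and a genuine lower bound survives.

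It then remains to estimate the resulting multiplicative sum. Writing $2^{-\omega(m)}\mathfrak{S}(m)/\phi(2m)=c\,f(m)$ with $c>0$ absolute and $f$ multiplicative, supported on odd squarefree $m$, one has $f(\ell)=\tfrac1{2\ell}(1+O(1/\ell))$ at odd primes $\ell$, so $\sum_{\ell\le \sqrt{x}}f(\ell)=\tfrac12\log\log x+O(1)$; a standard mean-value estimate for nonnegative multiplicative functions (Wirsing, or Selberg--Delange) then gives $\sum_{m\le\sqrt{x}}f(m)\asymp(\log x)^{1/2}$, the mass concentrating on $m$ with $\omega(m)\approx\tfrac12\log\log x$. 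Combining the three steps, $\Phi(x)\gg \frac{x}{(\log x)^2}\cdot(\log x)^{1/2}=\frac{x}{(\log x)^{3/2}}$. The hard part is the middle step: securing, on average over $m\le\sqrt{x}$, a lower bound for the prime-pair counts $N(x;m)$ uniform enough for the main term to dominate after weighting by $2^{-\omega(m)}/\phi(2m)$. This is the genuine analytic input, inherited from the Titchmarsh divisor problem, and the restriction to $P>\sqrt{x}$ (forcing all moduli below $\sqrt{x}$) is exactly what makes Bombieri--Vinogradov available.
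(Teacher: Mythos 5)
There is a genuine gap at your middle step, and it is exactly the parity obstruction you yourself flag for fixed $m$: averaging over $m$ does not remove it, and Bombieri--Vinogradov is not applicable. BV controls counts of primes in arithmetic progressions, i.e.\ quantities $\pi(x;2m,1)$; the defining condition of $N(x;m)$ --- that the cofactor $(p-1)/(2m)$ be \emph{prime} --- is not a congruence condition modulo $2m$, so the fact that all moduli $2m\le\sqrt{x}$ buys you nothing. Inserting $\Lambda\bigl((p-1)/(2m)\bigr)$ does not produce ``a correlation sum to which Bombieri--Vinogradov applies'': summed over $m$, your quantity is (a weighted, restricted version of) $\sum_{P>\sqrt{x}}\pi(x;P,1)$, i.e.\ primes in progressions to \emph{prime moduli beyond} $x^{1/2}$ --- precisely outside the BV range, where only much deeper dispersion-method machinery (Bombieri--Friedlander--Iwaniec, Fouvry) says anything, and in limited ranges and essentially for unweighted sums, not for the weighted lower bound you need. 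Nor is this ``of the type handled in the Titchmarsh divisor problem'': there the divisor $d\mid p-1$ is unrestricted and the cofactor is \emph{not} required to be prime, which is exactly why that problem reduces to progressions and BV. Finally, even importing the classical Chebyshev-type lower bound of Goldfeld (a positive proportion of $p\le x$ have $P^{+}(p-1)>\sqrt{x}$) would not rescue the exponent: for those primes the weight $2^{-\omega(m)}$ sees the \emph{typical} value $\omega(m)\sim\log\log x$, yielding only $\Phi(x)\gg x(\log x)^{-1-\log 2}$, and $1+\log 2\approx 1.693>3/2$. To recover $3/2$ you would need a lower bound on the count of primes with $P^{+}(p-1)>\sqrt{x}$ \emph{and} the atypical statistic $\omega(m)\approx\tfrac12\log\log x$, which is again a binary, parity-blocked problem. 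So the middle step is not merely ``the hard part''; as formulated it is not reachable by the tools you invoke.

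The fix is instructive, because your overall architecture (small factor carries the $1/\tau$ mass, large factor has bounded divisor function, then a Wirsing-type mean value $\asymp(\log x)^{1/2}$ --- your third step is essentially identical to the paper's evaluation of $\sum_a 1/(\tau(a)\varphi(a))$) is exactly the paper's, with one crucial weakening: the paper demands the large part be \emph{rough}, not prime. It writes $p-1=ab$ with $P^{+}(a)\le x^{1/40}$ and $P^{-}(b)>x^{1/40}$, so that $\tau(b)<2^{40}$ is bounded, and then for each even $a\le x^{1/40}$ lower-bounds $\#\{n\le(x-1)/a:\ an+1 \text{ prime},\ P^{-}(n)>x^{1/40}\}$ by the Brun--Hooley sieve. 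There the sifting conditions \emph{are} congruence conditions on $n$ inside the set $\{n:\ an+1 \text{ prime}\}$, with counts $\pi(x;ad,1)$ for $ad\le x^{7/20}$, so Bombieri--Vinogradov legitimately controls the accumulated remainders; asking only for $P^{-}(b)>x^{1/40}$ (hence at most $39$ prime factors) instead of primality of $b$ is what sidesteps parity, at the harmless cost of the constant $2^{-40}$. If in your outline you replace ``$P>\sqrt{x}$ prime'' by ``$b$ with $P^{-}(b)>x^{1/40}$'' and run a lower-bound sieve in place of the prime-pair count, your argument becomes, in substance, the paper's proof.
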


Thus, we find the correct order of magnitude of the sum $\Phi(x)$: 
$$
\Phi(x)\asymp \frac{x}{(\log x)^{3/2}},
$$ 
which answers the question of Karatsuba in the first approximation. 
 
 \smallskip
 
In addition to $\Phi(x)$, we consider the sum 
$$
F(x) = \sum_{n \leq x} \frac{1}{\tau(n^2+1)},
$$
and establish the following.
 
\begin{theorem}\label{th1.2}
We have
$$ 
F(x)\asymp \frac{x}{(\log x)^{1/2}}.
$$
\end{theorem}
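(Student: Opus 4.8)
The plan is to prove the two estimates $F(x)\ll x(\log x)^{-1/2}$ and $F(x)\gg x(\log x)^{-1/2}$ separately, in both cases comparing $1/\tau$ with the multiplicative model $g(m)=2^{-\omega(m)}=\prod_{p\mid m}\tfrac12$, whose mean value along the quadratic sequence $n^2+1$ I expect to be of order $x(\log x)^{-1/2}$. The basic arithmetic input is the elementary count $N_d(x):=\#\{n\le x:\ d\mid n^2+1\}=\rho(d)\tfrac{x}{d}+O(\rho(d))$, where $\rho$ is the multiplicative function counting solutions of $t^2\equiv-1\pmod d$, so that $\rho(2)=1$, $\rho(4)=0$, $\rho(p)=\rho(p^2)=2$ for $p\equiv1\pmod4$, and $\rho(p)=0$ for $p\equiv3\pmod4$. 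Since $\sum_{p\le z,\,p\equiv1(4)}1/p\sim\tfrac12\log\log z$, every Euler product that arises will carry the exponent $1/2$, and this is exactly the source of the factor $(\log x)^{-1/2}$.

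For the upper bound, $1/\tau$ is a nonnegative multiplicative function with $1/\tau(p)=1/2$ and $1/\tau(p^a)\le1$, so an upper bound of Shiu / Nair--Tenenbaum type for multiplicative functions along the values of an irreducible polynomial (as in Corollary 1.2 of \cite{Pol2020}, cited above, or by arguing as there) gives directly $F(x)\ll x\prod_{p\le x}\bigl(1+(\tfrac{1}{\tau(p)}-1)\tfrac{\rho(p)}{p}\bigr)=x\prod_{p\le x}\bigl(1-\tfrac{\rho(p)}{2p}\bigr)\asymp x\prod_{p\le x,\,p\equiv1(4)}(1-1/p)\asymp x(\log x)^{-1/2}$. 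This direction I regard as routine given the existing sieve machinery.

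The lower bound is where the real work lies, because the pointwise bound $1/\tau(m)\le2^{-\omega(m)}$ points the wrong way. I would restrict to squarefree values, where $1/\tau(m)=2^{-\omega(m)}$ exactly, and bound $F(x)\ge\sum_{n\le x}\mu^2(n^2+1)2^{-\omega(n^2+1)}=\sum_{n\le x}g(n^2+1)$ with $g=\mu^2\cdot2^{-\omega}$, so that $g(p)=\tfrac12$ and $g(p^a)=0$ for $a\ge2$. Writing $g=\mathbf 1*h$ produces $h$ supported on cubefree $d$ with $h(p)=h(p^2)=-\tfrac12$, whence $\sum_{n\le x}g(n^2+1)=\sum_d h(d)N_d(x)$. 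The associated generating series $H(s)=\sum_d h(d)\rho(d)d^{-s}=(1-\tfrac12 2^{-s})\prod_{p\equiv1(4)}(1-p^{-s}-p^{-2s})$ factors through $\zeta$ and $L(s,\chi_{-4})$ and carries a branch singularity $H(s)\sim C(s-1)^{1/2}$ at $s=1$; the Selberg--Delange method then converts this density-$\tfrac12$ behaviour into the asymptotic $\sum_{n\le x}g(n^2+1)\sim c\,x(\log x)^{-1/2}$ with $c>0$, which is the desired matching lower bound. (Equivalently, one may run a weighted Cauchy--Schwarz, $F(x)\ge(\sum_n w_n)^2/\sum_n w_n^2\tau(n^2+1)$ with $w_n=2^{-\omega_y(n^2+1)}$; estimating the denominator by the same Shiu-type bound reduces matters to a lower bound for $\sum_{n\le x}2^{-\omega_y(n^2+1)}\gg x(\log y)^{-1/2}$ with $y$ a fixed power of $x$.)

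The main obstacle, common to both formulations, is establishing the lower mean value at \emph{full} smoothness, i.e. with the sieving parameter a genuine power of $x$: the elementary fundamental lemma only reaches $y=x^{o(1)}$ and loses a factor $(\log\log x)^{1/2}$, so it cannot by itself beat $x(\log x)^{-1/2}$. Closing this gap requires the quadratic level of distribution of $n^2+1$ — precisely, leveraging the exact count $N_d(x)=\rho(d)x/d+O(\rho(d))$ uniformly for $d$ up to a power of $x$ — together with the contour-integration step of Selberg--Delange that handles the $(s-1)^{1/2}$ branch cut and extracts the positive main term while controlling the cancellation among the large divisors of $n^2+1$. This analytic input, rather than the multiplicative bookkeeping, is the delicate heart of the lower bound.
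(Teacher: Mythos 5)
Your upper bound is fine and is essentially the alternative route the paper itself acknowledges: a Shiu/Nair--Tenenbaum (or Barban--Vekhov) upper bound for the nonnegative multiplicative function $1/\tau$ along $n^2+1$, with the density $\tfrac12$ of primes $p\equiv 1\pmod 4$ producing the exponent $1/2$. (The paper's own self-contained argument is different but elementary: it bounds $1/\tau(n^2+1)\le 2^{-\omega(n^2+1)}$ and sums over the level sets of $\omega(n^2+1)$ using a Tenenbaum-type estimate, Lemma \ref{lem3.4}.)

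The lower bound, however, has a genuine gap, and moreover the obstacle you diagnose is not the real one. Your convolution plan $\mu^2 2^{-\omega}=\mathbf 1*h$ gives $\sum_{n\le x}g(n^2+1)=\sum_{d\le x^2+1}h(d)N_d(x)$, where the termwise errors $O(\rho(d))$ accumulate to $\asymp x^2(\log x)^{-O(1)}$ and the partial sums $\sum_{d\le D}|h(d)|\rho(d)/d\asymp(\log D)^{1/2}$ \emph{grow}, while the target main term is $\asymp(\log x)^{-1/2}$; so everything hinges on cancellation among large divisors $d$ of $n^2+1$, which is exactly what you do not prove. There is no off-the-shelf Selberg--Delange method along polynomial sequences that handles this: the classical contour argument applies to Dirichlet series of multiplicative functions over \emph{all} integers, not to $\sum_n g(n^2+1)n^{-s}$, and obtaining an asymptotic here would be a result of the same delicate flavour as the conjectural asymptotics the paper explicitly calls probably hard. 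Crucially, your stated reason for needing this machinery --- that ``the elementary fundamental lemma only reaches $y=x^{o(1)}$ and loses a factor $(\log\log x)^{1/2}$'' --- is false. Since $N_d(x)=\rho(d)x/d+O(\rho(d))$ gives $n^2+1$ level of distribution $x^{1-\e}$, a fundamental-lemma-type sieve (the paper uses Brun--Hooley) works perfectly well with sifting parameter a \emph{fixed power} of $x$: Lemma \ref{lem3.3} of the paper yields the two-sided bound $W_a(x,z)\asymp 2^{\omega(a)}x/(\varphi(a)\log z)$ with $z=x^{1/30}$, losing only absolute constants. The paper's lower bound is then entirely elementary: write $n^2+1=ab$ with $P^+(a)\le z$, $P^-(b)>z$, so that $b$ has fewer than $60$ prime factors and $\tau(b)\le 2^{60}$; restrict to squarefree $a\in\mathcal M$, where $\tau(a)=2^{\omega(a)}$ exactly cancels the sieve's factor $2^{\omega(a)}$; and invoke Landau's bound $\sum_{a\le y,\,a\in\mathcal M}1/a\asymp(\log y)^{1/2}$. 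No asymptotic, no cancellation, and no analytic continuation is needed --- only an order-of-magnitude sieve estimate, which is available at full power-of-$x$ smoothness, contrary to your assessment.
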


\medskip 
 
Now we discuss the main ideas of the proofs. In the sum $\Phi(x)$, for each prime $p\leq x$, we write $p-1$ in the form $ab$, where $a$ consists of prime factors not exceeding $z$, and $b$ has only prime factors greater than $z$; here $z=x^\e$ for some small fixed $\e>0$. Now $\Phi(x)$ can be rewritten as
\begin{equation}\label{0.2}
\Phi(x) = \sum_{\substack{a \leq x \\ p|a \Rightarrow p \leq z}} \frac{1}{\tau(a)} \sum_{\substack{b \leq \frac{x-1}{a} \\ p|b \Rightarrow p>z \\ ab +1 \text{ prime}}}  \frac{1}{\tau(b)},
\end{equation}
and, since $\tau(b)=O_\e(1)$, we have
\begin{equation*}\label{lower}
\Phi(x)\gg_\e \sum_{a\leq x^\e} \frac{1}{\tau(a)} \sum_{\substack{b \leq \frac{x-1}{a} \\ p|b \Rightarrow p>x^\e \\ ab +1 \text{ prime}}}  1.
\end{equation*}

Using the Brun-Hooley sieve, one can estimate the inner sum from below by a quantity of order
$$ 
\frac{x}{a(\log x)^2}-R(x;a),
$$ 
where the contribution of $R(x;a)$ to the outer sum will be negligible. Combining these estimates, we see that
$$
\Phi(x) \gg_\e \frac{x}{(\log x)^2}\sum_{a\leq x^\e} \frac{1}{a\tau(a)}\gg_\e \frac{x}{(\log x)^{3/2}},
$$
which gives the desired lower bound. Let us note that this argument does not yield any upper bound for $\Phi(x)$, since estimation of the contribution of numbers $a>x^{1-\e}$ to (\ref{0.2}) is actually equivalent to the initial problem.

The upper bound for $F(x)$ follows from Theorem 1 of \cite{Bar-Vekh}. We also note that the same upper bound can be derived from the inequality $\tau(n)\geq 2^{\omega(n)}$ (here $\omega(n)$ denotes the number of distinct prime divisors of $n$) and a bound for the number of $n\leq x$ with a given value of $\omega(n^2+1)$. For completeness, we provide this argument. 
 
We observe that the methods of this work can also be applied for other functions similar to $\tau(n)$. For instance, let $\tau_k(n)$ be the generalized divisor function, $\tau_k(n) = \sum_{n=d_1 d_2\ldots d_k}1$; then one can show that
$$ 
\sum_{p\leq x}\frac{1}{\tau_k(p-1)}\asymp_k x (\log x)^{\frac{1}{k}-2}.
$$
  
  
\textbf{Acknowledgements.} The work of Mikhail Gabdullin was supported by the Russian Science Foundation under grant no.19-11-00001, https://rscf.ru/en/project/19-11-00001/.
The work of Vitalii Iudelevich was supported by the
Theoretical Physics and Mathematics Advancement Foundation ``BASIS''. 
\section{Notation}

By $\varphi(n) = \#\{k \leq n: (k,n) = 1\}$ we denote the Euler totient function, and we use $P^{+}(n)$ and $P^{-}(n)$ for the least and the greatest prime divisors of a number $n>1$ respectively; by convention, $P^+(1)=0$ and $P^-(1)=\infty$. By $\pi(x)$ we denote the number of primes up to $x$, and  $\pi(x;q,a)$ stands for  the number of primes up to $x$ belonging to the arithmetic progression $a\pmod{q}$; we also set $R(x;q,a)=\pi(x;q,a)-\frac{\pi(x)}{\varphi(q)}$. The notation $f(x)\ll g(x)$, as well as $f(x) = O(g(x))$, means that $|f(x)|\leq C g(x)$ for some absolute constant $C>0$ and all possible values of $x$. We write $f(x)\asymp g(x)$, if $f(x)\ll g(x)\ll f(x)$, and write $f(x)\ll_k g(x)$ if we want to stress that the implied constant depends on $k$.

Now we recall some notation from the sieve methods. Let  $\mathcal{A}$ be a finite set of positive integers, and $\mathcal{P}$ be a finite subset of primes. Define $P = \prod\limits_{p \in \mathcal{P}} p$ and $S(\mathcal{A}, \mathcal{P}) = \#\{a \in \mathcal{A}: (a,P)=1 \}$. Let also $\mathcal{A}_d = \#\{a \in \mathcal{A}: a \equiv 0 \pmod d\}$. We assume that, for all $d|P$, 
\begin{equation}\label{cond_g}
\mathcal{A}_d = Xg(d)+r_d,
\end{equation}
where $g(d)$ is a multiplicative function such that $0<g(p)<1$ for $p \in \mathcal{P}$ and $g(p)=0$ for $p \not \in \mathcal{P}$.
 
Further, let the set $\mathcal{P}$ be divided into disjoint subsets $\mathcal{P}_1, \mathcal{P}_2, \ldots, \mathcal{P}_t$, and let $P_j= \prod\limits_{p \in \mathcal{P}_j} p$. Finally, let
$\{k_j\}_{r=1}^t$  be a sequence of even numbers. We set $V_j= \prod\limits_{p \in \mathcal{P}_j} (1-g(p))$,
 $L_j= \log V_j^{-1}$,
 $E = \sum\limits_{j=1}^{t} \frac{e^{L_j} (L_j)^{k_j+1}}{(k_j+1)!}$,
 $R = \sum\limits_{\substack{d_j|P_j\\ \omega(d_j) \leq k_j}} |r_{d_1 \ldots d_t}|$, and $R' = \sum\limits_{l=1}^t  \sum\limits_{\substack{d_j|P_j \\ \omega(d_j) \leq k_j, j \ne l \\ \omega(d_l) = k_l+1}} |r_{d_1 \ldots d_t}|$.
 
\section{Auxiliary results}

We need the following version of the Brun-Hooley sieve, which is due to Ford and Halberstam \cite{Ford-Halb2000}.

\begin{theorem}\label{th3.1}
Let \eqref{cond_g} be met. Then 
$$
S(\mathcal{A}, \mathcal{P}) \leq X \prod_{p \in \mathcal{P}} (1-g(p))e^{E} + R
$$
and
$$
S(\mathcal{A}, \mathcal{P}) \geq X \prod_{p \in \mathcal{P}} (1-g(p))(1-E) - R - R'.
$$
\end{theorem}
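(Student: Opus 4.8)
The plan is to prove both inequalities by the Brun--Hooley method, treating each block $\mathcal{P}_j$ by a truncated inclusion--exclusion and then combining the blocks multiplicatively. By Legendre's identity and the pairwise coprimality of $P_1,\dots,P_t$, one has $S(\mathcal{A},\mathcal{P})=\sum_{a\in\mathcal{A}}\prod_{j=1}^{t}\mathbf{1}_j(a)$, where $\mathbf{1}_j(a)=\mathbf{1}[(a,P_j)=1]$. For each block I would introduce the Brun truncation $U_j(a)=\sum_{d\mid(a,P_j),\,\omega(d)\leq k_j}\mu(d)$. The combinatorial heart is the Bonferroni identity $\sum_{i=0}^{k}(-1)^i\binom{\nu}{i}=(-1)^k\binom{\nu-1}{k}$ with $\nu=\omega((a,P_j))$: since each $k_j$ is even it gives $U_j(a)\geq\mathbf{1}_j(a)\geq 0$, and truncating one level further (at the odd level $k_j+1$, which undershoots the indicator) together with Pascal's rule gives the control $0\leq U_j(a)-\mathbf{1}_j(a)\leq\#\{d\mid(a,P_j):\omega(d)=k_j+1\}$.

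For the upper bound I would use $\mathbf{1}[(a,P)=1]=\prod_j\mathbf{1}_j(a)\leq\prod_j U_j(a)$, sum over $a$, and expand: coprimality gives $\prod_j U_j(a)=\sum_{d_j\mid P_j,\,\omega(d_j)\leq k_j}\mu(d_1\cdots d_t)\,\mathbf{1}[d_1\cdots d_t\mid a]$, so inserting $\mathcal{A}_d=Xg(d)+r_d$ produces the main term $X\prod_j W_j$ with $W_j=\sum_{d\mid P_j,\,\omega(d)\leq k_j}\mu(d)g(d)$ and a remainder of modulus at most $R$. It then remains to compare $W_j$ with $V_j=\prod_{p\in\mathcal{P}_j}(1-g(p))$. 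Here $W_j\geq V_j$ is the block Bonferroni bound, while for the other direction I would observe that $W_j-V_j$ is at most the single elementary symmetric function $e_{k_j+1}(g)=\sum_{d\mid P_j,\,\omega(d)=k_j+1}g(d)\leq\big(\sum_{p\in\mathcal{P}_j}g(p)\big)^{k_j+1}/(k_j+1)!\leq L_j^{k_j+1}/(k_j+1)!$ (using $g(p)\leq-\log(1-g(p))$, whence $\sum_p g(p)\leq L_j$). Dividing by $V_j=e^{-L_j}$ and using $1+u\leq e^u$ gives $V_j\leq W_j\leq V_j\exp\big(e^{L_j}L_j^{k_j+1}/(k_j+1)!\big)$, and multiplying over $j$ yields $\prod_j W_j\leq Ve^{E}$ with $V=\prod_{p\in\mathcal{P}}(1-g(p))$, which is the asserted upper bound.

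For the lower bound I would combine blocks by Hooley's telescoping identity $\prod_j\mathbf{1}_j=\prod_j U_j-\sum_{l=1}^{t}\big(\prod_{j<l}\mathbf{1}_j\big)(U_l-\mathbf{1}_l)\big(\prod_{j>l}U_j\big)$. All factors are nonnegative, so I may bound $\prod_{j<l}\mathbf{1}_j\leq\prod_{j<l}U_j$ and $U_l-\mathbf{1}_l\leq\#\{d\mid(a,P_l):\omega(d)=k_l+1\}$; summing over $a$ and inserting the model, the leading term gives $X\prod_j W_j$ up to an error $\leq R$, and the $l$-th subtracted term gives $X\,e_{k_l+1}(g;\mathcal{P}_l)\prod_{j\neq l}W_j$ up to errors that sum over $l$ to exactly $R'$ (block $l$ now ranging over $\omega(d_l)=k_l+1$ and the rest over $\omega(d_j)\leq k_j$). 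Crucially I would keep $\prod_j W_j$ as a common factor, obtaining $S(\mathcal{A},\mathcal{P})\geq X\big(\prod_j W_j\big)\big(1-\sum_l e_{k_l+1}(g;\mathcal{P}_l)/W_l\big)-R-R'$; since $W_l\geq V_l$ and $e_{k_l+1}(g;\mathcal{P}_l)\leq V_l\,e^{L_l}L_l^{k_l+1}/(k_l+1)!$ the bracket is $\geq 1-E$, and with $\prod_j W_j\geq V$ this gives $S\geq XV(1-E)-R-R'$ (the case $E\geq 1$ being trivial, as the right-hand side is then nonpositive).

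The step I expect to be the main obstacle is the per-block comparison of $W_j$ with $V_j$: one must bound $W_j-V_j$ by the single term $e_{k_j+1}(g)$ rather than by the whole Bonferroni tail (the naive tail bound loses a factor $e^{L_j}$ and destroys the stated constant), which is what forces the use of the Pascal/odd-truncation step. A secondary point requiring care is the bookkeeping in the lower bound: one must resist distributing the product prematurely and instead factor out $\prod_j W_j$, as above, so that the error terms collapse to precisely $R'$ and the main term emerges as $V(1-E)$ rather than $V(1-E)e^{E}$.
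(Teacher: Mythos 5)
The paper offers no proof of Theorem \ref{th3.1}; it simply cites Ford and Halberstam, and your argument is a correct reconstruction of exactly that cited proof: the even-truncation Bonferroni bounds $0\leq U_j-\mathbf{1}_j\leq\#\{d\mid(a,P_j):\omega(d)=k_j+1\}$, Hooley's telescoping identity for the lower bound, the per-block comparison $V_j\leq W_j\leq V_j+e_{k_j+1}(g)$ (which transfers from the pointwise inequality, e.g.\ by taking expectations over independent events of probabilities $g(p)$), and the bounds $e_{k_j+1}(g)\leq L_j^{k_j+1}/(k_j+1)!$ with $V_j^{-1}=e^{L_j}$. All the delicate points check out, including factoring out $\prod_j W_j$ so the error collapses to $R'$ and the main term is $V(1-E)$ rather than $Ve^{E}(1-E)$, and the observation that the case $E\geq 1$ is vacuous, so your proposal matches the paper's (referenced) approach.
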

 
\begin{proof}
See \cite{Ford-Halb2000}.
\end{proof}
 
\begin{lem}\label{lem3.2}
Let $a \leq x^{1/40}$ be an even positive integer and 
$$
F_a(x) = \#\left\{ n \leq (x-1)/a: an+1\text{ prime}\textit{ and } P^{-}(n)>x^{{1}/{40}} \right\}.
$$
Then, for $x\geq x_0$, 
$$
F_a(x) \geq \frac{c_1\pi(x)}{\varphi(a)\log x}-R_1,
$$
where $c_1>0$ is an absolute constant and
$$
0\leq R_1\leq \sum_{d\leq x^{13/40}}|R(x;ad,1)| 
$$
\end{lem}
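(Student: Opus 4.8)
The plan is to realize $F_a(x)$ as the quantity $S(\mathcal{A},\mathcal{P})$ produced by the Brun--Hooley sieve of Theorem~\ref{th3.1}. I take
$$
\mathcal{A}=\{n\le (x-1)/a:\ an+1\text{ is prime}\},\qquad \mathcal{P}=\{p\le z\},\qquad z=x^{1/40}.
$$
Since $(n,\prod_{p\le z}p)=1$ is exactly the condition $P^{-}(n)>z$, we have $F_a(x)=S(\mathcal{A},\mathcal{P})$. For squarefree $d\mid P$, the constraints $d\mid n$ and $an+1=q$ prime amount to $q\le x$ prime with $q\equiv 1\pmod{ad}$, so $\mathcal{A}_d=\pi(x;ad,1)=\frac{\pi(x)}{\varphi(ad)}+R(x;ad,1)$. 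Thus \eqref{cond_g} holds with $X=\pi(x)/\varphi(a)$, with $r_d=R(x;ad,1)$, and with the multiplicative $g$ determined by $g(p)=1/(p-1)$ for $p\nmid a$ and $g(p)=1/p$ for $p\mid a$ (the latter because $\varphi(ap)=p\,\varphi(a)$ when $p\mid a$). It is here that the evenness of $a$ is used: it forces $2\mid a$ and hence $g(2)=1/2<1$, whereas for odd $a$ one would have $g(2)=1$, reflecting the fact that then $an+1$ is even and $F_a(x)=0$.

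I would then feed this into the lower bound of Theorem~\ref{th3.1}, namely $S(\mathcal{A},\mathcal{P})\ge X\prod_{p\le z}(1-g(p))\,(1-E)-R-R'$, and evaluate the main factor. Because $g(2)=1/2$ the product has no vanishing factor, and since the ratio of $1-\frac1{p-1}$ to $1-\frac1p$ equals $1-\frac1{(p-1)^2}$, whose product over all $p$ converges, Mertens' theorem gives $\prod_{p\le z}(1-g(p))\asymp 1/\log z\asymp 1/\log x$ uniformly in the even $a\le z$. Hence $X\prod_{p\le z}(1-g(p))\asymp \pi(x)/(\varphi(a)\log x)$, which is the desired main term. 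It remains to choose the partition $\mathcal{P}=\mathcal{P}_1\cup\dots\cup\mathcal{P}_t$ and the even levels $k_j$ so that $E$ is bounded away from $1$ and so that every modulus $ad_1\cdots d_t$ occurring in $R$ and $R'$ satisfies $d_1\cdots d_t\le x^{13/40}$; then $R_1:=R+R'\le\sum_{d\le x^{13/40}}|R(x;ad,1)|$, since the factorisation of a squarefree $d$ across the $\mathcal{P}_j$ is unique and the ranges defining $R$ and $R'$ are disjoint.

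The crux, and the main obstacle, is to reconcile these two demands. The naive reading that the level constraint forces $\sum_j k_j\le 12$ would be fatal: the total $L:=\sum_{p\le z}\log\frac1{1-g(p)}\asymp\log\log x$ tends to infinity, so some $L_j$ would have to grow and the term $e^{L_j}L_j^{k_j+1}/(k_j+1)!$ of $E$ would blow up. The resolution is that the level is weighted by prime size: taking dyadic ranges $\mathcal{P}_j=\{p:\ z^{2^{-j}}\le p<z^{2^{-(j-1)}}\}$, a product with $\omega(d_j)\le k_j$ obeys $d_1\cdots d_t\le z^{\sum_j k_j 2^{-(j-1)}}$, so it suffices that $\sum_j k_j 2^{-(j-1)}\le 12$ (one unit being reserved for the extra prime in $R'$), which permits large truncation on the small primes at negligible cost. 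With $k_j=2j$ one has $\sum_j 2j\,2^{-(j-1)}=8$, well within budget, while each $L_j\le\log 2+o(1)$ gives $E\approx\sum_{j}\frac{2(\log 2)^{2j+1}}{(2j+1)!}$, a convergent sum comfortably below $1$. For such a choice $1-E$ is a positive absolute constant, and together with the main-term estimate this yields $F_a(x)\ge c_1\pi(x)/(\varphi(a)\log x)-R_1$ with $0\le R_1\le\sum_{d\le x^{13/40}}|R(x;ad,1)|$, as required.
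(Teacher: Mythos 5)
Your construction is exactly the paper's: the same $\mathcal{A}=\{n\le (x-1)/a: an+1 \text{ prime}\}$ and $\mathcal{P}=\{p\le z\}$ with $z=x^{1/40}$, the same identification $\mathcal{A}_d=\pi(x;ad,1)$ leading to $X=\pi(x)/\varphi(a)$, $g(p)=1/(p-1)$ for $p\nmid a$, $g(p)=1/p$ for $p\mid a$, and $r_d=R(x;ad,1)$, the same dyadic partition $z_j=z^{2^{1-j}}$, and the same level accounting $d_1\cdots d_t\le z^{\sum_j k_j 2^{1-j}}$ with one extra factor at most $z$ reserved for $R'$; the paper takes $k_j=2j+2$ and lands on $z^{13}=x^{13/40}$ exactly, while your $k_j=2j$ gives $z^{9}$, also within the stated level. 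Your side remarks (evenness of $a$ forcing $g(2)=1/2<1$, uniformity in $a$ of $\prod_{p\le z}(1-g(p))\asymp 1/\log x$ via the convergent correction $1-1/(p-1)^2$, disjointness of the $d$'s counted by $R$ and $R'$) are all correct and match the paper.

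The one step that fails as written is the blanket claim that ``each $L_j\le\log 2+o(1)$.'' That is true for each \emph{fixed} $j$, since the primes in $(z^{2^{-j}},z^{2^{1-j}}]$ are then large and Mertens gives $V_j^{-1}=2+o(1)$, the $1/(p-1)$-versus-$1/p$ correction $\prod\bigl(1+\frac{1}{p(p-2)}\bigr)$ being $1+o(1)$ over such a block; but it is false for the terminal blocks where $z_{j+1}$ is bounded. For instance, the block containing $\{2,3\}$ can have $V_j^{-1}$ as large as $(1-g(2))^{-1}(1-g(3))^{-1}=4$, and in general one only has a uniform constant bound --- the paper proves $V_j^{-1}\le 3C\le 5$ for all $j$. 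This is not cosmetic for your parameter choice: with the honest uniform bound $L_j\le\log 5$ and $k_j=2j$, already the $j=1$ term of $E$ is $e^{\log 5}(\log 5)^{3}/3!\approx 3.5>1$, so you cannot simply quote a uniform $L$; your bound $E<1$ genuinely needs $L_j\to\log 2$ for the early blocks. The repair is easy and goes either of two ways: split $E$ into the blocks with $z_{j+1}$ large (where $L_j=\log 2+O(1/\log z_{j+1})$ by Mertens with error term) plus the $O(1)$ terminal blocks, whose contributions are crushed by $(k_j+1)!$ since $k_j=2j\to\infty$ there; or do as the paper does --- keep the uniform bound $L_j\le\log 5$ valid for every block and enlarge to $k_j=2j+2$, which yields $E\le 0.48$ and uses the full level $z^{13}=x^{13/40}$. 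With that one estimate patched, your proof is complete and is essentially the paper's proof.
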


\begin{rem}
Analogous upper bounds for $F_a(x)$ without the error term $R_1$ can be obtained by sifting the set $\{(an+1)(n+P): n\leq (x-1)/a\}$, where $P=\prod_{p\leq z}p$. 
\end{rem}

\begin{proof}
We apply Theorem \ref{th3.1} to the sets $\mathcal{A} = \{n \leq (x-1)/a: an + 1\, \text{ prime}\}$ and $\mathcal{P} = \{p \leq z\}$, where $z=x^{1/40}$ (the other parameters will be chosen later). Then $F_a(x)=S(\mathcal{A}, \mathcal{P})$, and, for any $d|P=\prod_{p\leq z}p$, we have
$$
\mathcal{A}_d =\#\{k\leq (x-1)/(ad): adk+1\, \text{ prime}\} 
= \pi(x; ad,1)=\frac{\pi(x)}{\varphi(ad)}+R(x; ad,1).
$$
Now we write a number $d|P$ in the form $d=d_1d_2$, where $(d_1,a)=1$ and $d_2$ consists only of primes which divide $a$. Then, since $\varphi(ad_2)=ad_2\prod_{p|a}(1-1/p)=d_2\varphi(a)$, we find that
$$
\frac{1}{\varphi(ad)}=\frac{1}{\varphi(d_1)\varphi(ad_2)}=\frac{1}{\varphi(d_1)d_2\varphi(a)},
$$
and, hence, the equality (\ref{cond_g}) holds with $X = \frac{\pi(x)}{\varphi(a)}$, the multiplicative function $g$ defined by 
$$
g(p) = \begin{cases}
\frac{1}{p-1},\ \quad \text{for}\ (p,a)=1, \\
\frac{1}{p},\,\quad \quad \text{otherwise},
\end{cases}
$$
and
$$
r_d = R(x; ad, 1).
$$
Besides, we have
$$ 
\prod_{p \in \mathcal{P}}(1-g(p))\asymp \frac{1}{\log x},
$$
with an absolute implied constant.
	
Now we choose the partition of $\mathcal{P}$ and define numbers $\{k_j\}_{j=1}^t$. Set $z_j= z^{2^{1-j }}$ and
$$
\mathcal{P}_j = \mathcal{P} \cap (z_{j+1}, z_j],
$$
where $t$ is the unique positive integer with $z_{t+1} < 2 \leq z_t$. Let $k_j = b + 2(j-1)$, where $b \geq 2$ is even and will be chosen later. We also set
$$
C=\prod_{p>2}\left(1+\frac{1}{p^2-2p}\right)\leq 1.52.
$$ 
By \cite{Ros}, Theorem 7 (see (3.26), (3.27)), for any $x>1$, 
\begin{equation}\label{3.1}
\frac{e^{-\gamma}}{\log x}\left(1-\frac{1}{\log^2x}\right)  < \prod_{p\leq x}\left(1-1/p\right) < \frac{e^{-\gamma}}{\log x}\left(1+\frac{1}{2\log^2x}\right),
\end{equation}
where $\gamma$ is Euler's constant. Hence, for any $z\geq \sqrt2$, 
\begin{equation}\label{3.2}
\prod_{z<p\leq z^2}(1-1/p)^{-1}\leq 3;
\end{equation} 
indeed, it follows from (\ref{3.1}) for $z\geq 4$, and can be checked manually for $\sqrt{2}\leq z<4$. Thus
$$
V_j^{-1} = \prod_{z_{j+1} < p \leq z_j} (1 - g(p))^{-1} \leq  C\prod_{z_{j+1}<p\leq z_j}(1-1/p)^{-1} \leq 3C\leq 5.
$$
Hence $L_j =  \log V_j^{-1}\leq L=\log 5$ and
$$
E = \sum_{j=1}^t \frac{e^{L_j} L_j^{k_j+1}}{(k_j+1)!} \leq e^L\sum_{j=1}^t\frac{L^{b+2j-1}}{(b+2j-1)!}. 
$$
	
Now we estimate $R+R'$. If a number $d$ corresponds to a summand from $R$, we have $d=d_1 \ldots d_t$ with $d_j | {P}_j$ and $\omega(d_j) \leq k_j=b+2(j-1)$. Therefore,
$$
d \leq z_1^{k_1} \ldots z_t^{k_t} \leq z^{b+(b+2)/2+(b+4)/4+...}=z^{2b+4}.
$$
If a number $d$ corresponds to a summand from $R'$, we similarly find that $d \leq z^{2b+4} z = z^{2b+5}$.
Since the numbers $d$ corresponding to the sums $R$ and $R'$ are distinct and do not exceed $z^{2b+5}$, we see that
$$
R+R' \leq \sum_{d \leq z^{2b+5}} |r_d|.
$$
Now we take $b=4$. Then $2b+5=13$, 
$$
E\leq 5\sum_{j=1}^{\infty}\frac{(\log 5)^{2j+3}}{(2j+3)!} \leq 0.48,
$$
and
$$
R+R' \leq \sum_{d \leq x^{13/40}} |R(x;ad,1)|.
$$
The claim follows.  
\end{proof}
 
Define
$$
\mathcal{M} = \left\{ a \geq 1: a \mid n^2 + 1 \text{ for some }  n \geq 1 \right\}.
$$ 
It is well-known that $a \in \mathcal{M}$ if and only if $4 \nmid a$ and $p \nmid a$ for any $p \equiv 3\, (\text{mod}\, 4)$.

\begin{lem}\label{lem3.3}
Let $2\leq a, z\leq x^{1/30}$, $a \in \mathcal{M}$ and  $P^{+}(a)\leq z$. Let 
$$ 
W_a(x,z) = \# \left\{ n \leq x: a|(n^2 + 1) \text{ and } P^-\left(\frac{n^2+1}{a}\right) > z  \right\}.
$$
Then
$$
W_a(x,z) \asymp \frac{2^{\omega(a)}}{\varphi(a)}\frac{x}{\log z} .
$$
\end{lem}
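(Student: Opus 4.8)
The plan is to deduce this from the Brun--Hooley sieve, Theorem \ref{th3.1}, applied to the set
$$
\mathcal{A}=\Big\{\tfrac{n^2+1}{a}:\ n\leq x,\ a\mid n^2+1\Big\}
$$
sifted by the primes $\mathcal{P}=\{p\leq z:\ p=2\text{ or }p\equiv1\ (\mathrm{mod}\ 4)\}$. The key observation is that no prime $p\equiv3\pmod4$ divides any $n^2+1$, so the only primes $\leq z$ that can divide an element of $\mathcal{A}$ lie in $\mathcal{P}$; hence $S(\mathcal{A},\mathcal{P})=W_a(x,z)$. A convenient feature of this formulation is that it automatically absorbs the ``exact power'' conditions at the primes dividing $a$: removing from $\mathcal{A}$ the elements divisible by a prime $p\mid a$ is precisely the requirement that $p^{v_p(a)}$ divides $n^2+1$ to the exact power $v_p(a)$, so these primes require no separate handling.

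First I would verify \eqref{cond_g}. Writing $\rho(m)$ for the number of solutions of $n^2\equiv-1\pmod m$, the function $\rho$ is multiplicative with $\rho(p^k)=2$ for $p\equiv1\pmod4$, $\rho(2)=1$ and $\rho(2^k)=0$ for $k\geq2$; since $a\in\mathcal{M}$ this gives $\rho(a)\asymp2^{\omega(a)}$. A direct count of residue classes via the Chinese Remainder Theorem shows that, for squarefree $d\mid\prod_{p\in\mathcal{P}}p$, the quantity $\mathcal{A}_d$ counts $n\leq x$ in at most $\rho(a)2^{\omega(d)}$ residue classes modulo $ad$. Hence \eqref{cond_g} holds with $X=\rho(a)x/a$ and the multiplicative function $g$ determined by
$$
g(p)=\tfrac1p\ \ (p\mid a),\qquad g(p)=\tfrac2p\ \ (p\equiv1\!\!\pmod4,\ p\nmid a),\qquad g(2)=\tfrac12\ \ (2\nmid a),
$$
and with $|r_d|\ll2^{\omega(a)}2^{\omega(d)}$. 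The value $g(p)=1/p$ at $p\mid a$ is exactly what encodes the exact-power condition.

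Next I would compute the main term $X\prod_{p\in\mathcal{P}}(1-g(p))$. Using the standard estimate $\prod_{p\leq z,\,p\equiv1(4)}(1-1/p)\asymp(\log z)^{-1/2}$ (Mertens in the progression) together with the absolutely convergent product $\prod_p(1+\tfrac{1}{p^2-2p})=C$ already introduced in the paper, one obtains $\prod_{p\leq z,\,p\equiv1(4)}(1-2/p)\asymp(\log z)^{-1}$. Separating the primes dividing $a$ and using $\prod_{p\mid a}(1-1/p)=\varphi(a)/a$ then yields
$$
X\prod_{p\in\mathcal{P}}(1-g(p))\asymp2^{\omega(a)}\frac{x}{\log z}\cdot\frac{\varphi(a)}{a^2}\prod_{p\mid a}\Big(1-\frac2p\Big)^{-1}.
$$
The elementary identity $\frac{\varphi(a)}{a^2}\prod_{p\mid a}(1-2/p)^{-1}=\frac{1}{\varphi(a)}\prod_{p\mid a}\big(1+\frac{1}{p^2-2p}\big)$, together with the bounds $1\leq\prod_{p\mid a}\big(1+\frac{1}{p^2-2p}\big)\leq C$, shows that this is $\asymp\frac{2^{\omega(a)}}{\varphi(a)}\frac{x}{\log z}$, which is the desired order of magnitude.

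Finally I would bound the sieve errors exactly as in Lemma \ref{lem3.2}: partition $\mathcal{P}$ by $z_j=z^{2^{1-j}}$, set $k_j=b+2(j-1)$ for an even constant $b$, and note that $g(p)\leq2/p$ together with \eqref{3.2} makes each $V_j^{-1}$ bounded, so $L_j\leq L$ for an absolute $L$ and $E\leq e^{L}\sum_{j\geq1}L^{b+2j-1}/(b+2j-1)!$ can be made as small as we wish by enlarging $b$. The Brun--Hooley support is $d\leq z^{2b+5}$, so
$$
R+R'\leq\sum_{d\leq z^{2b+5}}|r_d|\ll2^{\omega(a)}\sum_{d\leq z^{2b+5}}2^{\omega(d)}\ll2^{\omega(a)}z^{2b+5}\log z,
$$
while the main term is $\gg2^{\omega(a)}x^{29/30}/\log z$ because $\varphi(a)\leq a\leq x^{1/30}$. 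Thus, choosing $b$ to be an even constant with $2b+5<29$ that is nonetheless large enough to force $E\leq1/2$, the error $R+R'$ becomes negligible, and the two inequalities of Theorem \ref{th3.1} then give $W_a(x,z)\asymp\frac{2^{\omega(a)}}{\varphi(a)}\frac{x}{\log z}$. I expect the main obstacle to be precisely the tension in this last step between the two roles of $b$: a larger $b$ shrinks $E$ (which the lower bound needs, through $1-E>0$) but enlarges the remainder support $z^{2b+5}$, and this is exactly why the hypothesis $z\leq x^{1/30}$ is imposed. A secondary delicate point is the main-term computation, where the mismatch between $\prod(1-2/p)$ and the $\varphi(a)/a$ factors must be reconciled through the convergent product defining $C$.
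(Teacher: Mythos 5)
Your proposal follows the paper's proof of Lemma \ref{lem3.3} essentially step for step: the same sifted set $\mathcal{A}=\{(n^2+1)/a:\ n\leq x,\ a\mid n^2+1\}$, the same densities $g(p)=2/p$ for $p\nmid a$ and $g(p)=1/p$ for $p\mid a$ with $X=\rho(a)x/a\asymp 2^{\omega(a)}x/a$, the same use of \eqref{3.3} together with the convergent product $C=\prod_{p>2}\bigl(1+\frac{1}{p^2-2p}\bigr)$ to reconcile $\prod(1-2/p)$ with $\varphi(a)/a$, and the same Brun--Hooley bookkeeping ($z_j=z^{2^{1-j}}$, $k_j=b+2(j-1)$, remainder support $z^{2b+5}$, error $\ll 2^{\omega(a)}z^{2b+5}\log z$ against a main term $\gg 2^{\omega(a)}x^{29/30}/\log z$). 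Your closing remark about the tension in the choice of $b$ is exactly resolved by the paper's choice $b=10$, which gives $E\leq 0.43$ and $2b+5=25<29$.

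The one genuine flaw is your treatment of the prime $2$. Membership in $\mathcal{M}$ only excludes $4\mid a$, so the case $a\equiv 2\pmod 4$ must be covered (and is needed in the applications to Lemma \ref{lem3.4} and Theorem \ref{th1.2}), and there your setup fails: you place $2$ in $\mathcal{P}$ unconditionally, and your rule $g(p)=1/p$ for $p\mid a$ assigns $g(2)=1/2$. But if $2\,\|\,a$, then $4\mid 2a$ and $n^2\equiv-1\pmod 4$ is insoluble, so $\mathcal{A}_2=0$ (indeed $\mathcal{A}_d=0$ for every even $d$); hence $r_2=-Xg(2)$ is of main-term size and your claimed bound $|r_d|\ll 2^{\omega(a)}2^{\omega(d)}$ is false. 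Summed over even $d\leq z^{2b+5}$, these remainders contribute $\gg X\log z$ to $R+R'$ and swamp the main term, so Theorem \ref{th3.1} then yields nothing. For the same reason, your displayed identities involving $\prod_{p\mid a}(1-2/p)^{-1}$ and $1+\frac{1}{p^2-2p}$ divide by zero at $p=2$ when $a$ is even. The repair is exactly the paper's case split: include $2$ in $\mathcal{P}$ only when $a$ is odd --- harmless, since for $2\,\|\,a$ and $a\mid n^2+1$ one has $n$ odd and $n^2+1\equiv 2\pmod 4$, so $(n^2+1)/a$ is automatically odd and $S(\mathcal{A},\mathcal{P})=W_a(x,z)$ still holds --- and restrict the Euler products over $p\mid a$ to $p>2$, at the cost of bounded factors (e.g. $X=x2^{\omega(a)-\I(2|a)}/a$ in the paper). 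With that case split inserted, your argument coincides with the paper's.
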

 
\begin{proof} Setting $\mathcal{A} = \{k: ak=n^2+1 \mbox{ for some } n\leq x\}$ and 
$$
\mathcal{P} = \begin{cases}
\{p \leq z: p\equiv 1\pmod{4}\}, &\text{ if $a$ is even};\\
\{2\}\cup \{p \leq z: p\equiv 1\pmod{4}\}, &\text{ if $a$ is odd,}
\end{cases}
$$ 
we have $W_a(x,z)  = S(\mathcal{A}, \mathcal{P})$. As before, we write each $d|P=\prod_{p\in\mathcal{P}}p$ as $d=d_1d_2$, where $(d_1,a)=1$ and $d_2$ consists only of primes dividing $a$.
Suppose that one of the numbers $d$ and $a$ is even. Then by the Chinese remainder theorem and the fact that the congruence $x^2+1\equiv0\pmod{p}$ has two solutions $p\equiv 1\pmod{4}$ and one solution for $p=2$, we get
\begin{multline*}
\mathcal{A}_d =\#\{n\leq x: n^2+1 \equiv 0\!\!\pmod{ad} \} = \frac{x2^{\omega(ad)-1}}{ad}+O(2^{\omega(ad)})\\
=\frac{x2^{\omega(a)+\omega(d_1)-1}}{ad_1d_2}+O(2^{\omega(ad)}).
\end{multline*}
If both $a$ and $d$ are odd, a similar argument shows that
$$ 
\mathcal{A}_d = \frac{x2^{\omega(a)+\omega(d_1)}}{ad_1d_2}+O(2^{\omega(ad)}).
$$ 
In both cases we have
\begin{equation*}
\mathcal{A}_d = \frac{x2^{\omega(a)+\omega(d_1)-\I(2|ad)}}{ad_1d_2}+O(2^{\omega(ad)})
 =\frac{x2^{\omega(a)-\I(2|a)}}{a}\dfrac{2^{\omega(d_1)-\I(2|d_1)}}{d_1d_2}+O\left(\tau(ad)\right), 
\end{equation*}
where $\I(2|l)$ is equal to one if $l$ is even and equals to zero otherwise. Thus we see that the condition \eqref{cond_g} holds with $X= \frac{x2^{\omega(a)-\I(2|a)}}{a}$, the multiplicative function $g$ defined on the primes from $\mathcal{P}$ by
$$
g(p) = \begin{cases}
\frac{2}{p}, \quad p\nmid a, \\
\frac{1}{p}, \quad p|a,
\end{cases}
$$
(and also $g(2)=1/2$ in the case of odd $a$), and
$$
r_d = O(\tau(ad)).
$$
It is well-known that
\begin{equation}\label{3.3}
\prod\limits_{\substack{p\leq x \\ p\equiv\, 1(\text{mod}\,4)}}\left(1-\dfrac{1}{p} \right)\asymp \dfrac{1}{\sqrt{\log x}},
\end{equation}
(see \cite{Uchi}). Thus, in both cases we have
$$ 
\prod_{p \in \mathcal{P}}(1-g(p))\asymp \prod_{p\in \mathcal{P}, \, p>2}(1-2/p)\prod_{p|a,\,p>2}\frac{1-1/p}{1-2/p} \asymp \frac{a}{\varphi(a)\log z}
 $$
with an absolute implied constant.
 	
Now we choose the partition of $\mathcal{P}$ and define the numbers $\{k_j\}_{j=1}^t$. We again set $z_j= z^{2^{1-j }}$ and
$$
\mathcal{P}_j = \mathcal{P} \cap (z_{j+1}, z_j],
$$
where $t$ is a unique positive integer such that $z_{t+1} < 2 \leq z_t$ and $k_j = b + 2(j-1)$ with some even $b \geq 2$. Now (\ref{3.2}) implies that
 \begin{multline*}
 V_j^{-1} = \prod_{z_{j+1} < p \leq z_j} (1 - g(p))^{-1} \leq 2\prod_{z_{j+1}<p\leq z_j, \, p\neq2}(1-2/p)^{-1}\\
 =2\prod_{z_{j+1}<p\leq z_j, \, p\neq2}(1-1/p)^{-2}\frac{(1-1/p)^2}{(1-2/p)} \leq  18C\leq 28.
\end{multline*}
Therefore, $L_j =  \log V_j^{-1}\leq L=\log 28$ and
$$
E = \sum_{j=1}^t \frac{e^{L_j} L_j^{k_j+1}}{(k_j+1)!} \leq e^L\sum_{j=1}^t\frac{L^{b+2j-1}}{(b+2j-1)!}.
$$
Now we take $b=10$; then $E\leq 0.43$ and $2b+5=25$. As in the proof of Lemma \ref{lem3.2}, we get
$$
R+R' \leq \sum_{d \leq z^{2b+5}} |r_d|\ll \tau(a)\sum_{d\leq z^{25}}\tau(d)\ll x^{1/30}z^{25} \ll x^{26/30},
$$
and the main term is at least
$$
X\prod_{p\in \mathcal{P}}(1-g(p)) \asymp\frac{x2^{\omega(a)}}{\varphi(a)\log z} \gg\frac{x^{1-1/30}}{\log x}\,.
$$
Applying Theorem \ref{th3.1} concludes the proof. 
\end{proof}
  
The next lemma is a special case of the result of J.~Tenenbaum \cite{Tenen}. For completeness, we provide the proof.
 
\begin{lem}\label{lem3.4} 
Let $R>0$ be fixed. There exist positive constants $A=A(R)$ and $B=B(R)$ such that, for all $1\leq k \leq R\log\log x$,
$$
\#\{n\leq x: \o(n^2+1)=k\} 	\leq \frac{Ax(\log\log x+B)^{k-1}}{(k-1)! \log x}.
$$
\end{lem}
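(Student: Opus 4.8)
\emph{Setup and structural input.} Write $N_k(x)=\#\{n\le x:\omega(n^2+1)=k\}$. The argument rests on two facts about the sequence $n^2+1$: every prime divisor is either $2$ or a prime $p\equiv 1\pmod 4$, and for squarefree $m\in\mathcal{M}$ the number of solutions of $n^2\equiv-1\pmod m$ is the multiplicative function $\rho(m)=2^{\omega(m)}$ (with one solution at $p=2$), so that $\#\{n\le x:m\mid n^2+1\}=\rho(m)x/m+O(\rho(m))$. Since $\rho(p)=1+\chi_{-4}(p)$, the associated Dirichlet series $\sum_m\rho(m)(z-1)^{\omega(m)}m^{-s}$ factors, up to a factor holomorphic for $\mathrm{Re}(s)>1/2$, as $\zeta(s)^{z-1}L(s,\chi_{-4})^{z-1}$; this is what forces the normal order of $\omega(n^2+1)$ to be $\log\log x$ and, through $\sum_{p\equiv 1\pmod 4,\,p\le X}2/(p-1)=\log\log x+O(1)$, produces the factor $(\log\log x)^{k-1}$ in the bound.

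\emph{The coefficient extraction.} The target has the sharp shape $\tfrac{x}{\log x}\cdot\tfrac{(\log\log x+B)^{k-1}}{(k-1)!}$, and I would obtain it by isolating a large prime factor rather than by a Rankin bound: optimizing $N_k(x)\le z^{-k}\sum_{n\le x}z^{\omega(n^2+1)}$ over real $z>0$ loses a factor $\sqrt{\log\log x}$ at the peak $k\asymp\log\log x$, so it is too weak. Instead, fixing a small $\delta>0$, for those $n$ whose $n^2+1$ has a prime factor exceeding $x^\delta$ I would write $n^2+1=mc$ with $\omega(m)=k-1$ and $c$ free of primes $\le x^\delta$, and majorize
$$
N_k(x)\le \sum_{\substack{m\in\mathcal{M},\ \omega(m)=k-1}}\#\{n\le x:\ m\mid n^2+1,\ P^-((n^2+1)/m)>x^\delta\}+(\text{error}).
$$
The inner count is exactly of the type estimated by the Brun--Hooley sieve (Theorem \ref{th3.1}), as carried out in Lemma \ref{lem3.3}: since $x^\delta$ is a fixed power of $x$ it is $\ll \tfrac{2^{\omega(m)}}{\varphi(m)}\cdot\tfrac{x}{\log x}$, the sieve supplying precisely the saving $1/\log x$. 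Summing the multiplicative weight by the multinomial inequality $\sum_{\omega(m)=k-1}2^{\omega(m)}/\varphi(m)\le \tfrac1{(k-1)!}\big(\sum_p 2/(p-1)\big)^{k-1}$ over admissible primes, together with the Mertens estimate above, then yields $N_k(x)\ll \tfrac{x}{\log x}\cdot\tfrac{(\log\log x+B)^{k-1}}{(k-1)!}$.

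\emph{Main obstacle.} The delicate point is uniformity over the whole range $1\le k\le R\log\log x$ with constants depending only on $R$. Two regimes need care. First, the ``error'' above collects the $n$ for which $n^2+1$ is $x^\delta$-smooth (has no prime factor $>x^\delta$); this is a positive proportion of all $n$, so the sieve saving degrades there, and controlling its contribution with exactly $\omega=k$ prime factors is the crux — this is where the analytic continuation of $\zeta(s)^{z-1}L(s,\chi_{-4})^{z-1}$ and a Selberg--Delange/Hankel-contour extraction of the coefficient of $z^k$ in $\sum_{n\le x}z^{\omega(n^2+1)}$ give the clean uniform answer that the elementary split does not. Second, for large $k$ the cofactor $m$ can approach $x^2$, where the sieve main term $x\rho(m)/m$ sinks below its error and Lemma \ref{lem3.3} (stated only for $m\le x^{1/30}$) no longer applies directly; I would remedy this by arranging the split so the sieved factor stays below $x$ and by controlling the singular-series factor $m/\varphi(m)$ on average, so that it perturbs only the constant $B$ and not the exponent $k-1$. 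The restriction $k\le R\log\log x$ enters precisely in keeping $(1+B/\log\log x)^{k-1}\ll_R 1$ and the prime-power corrections bounded, which is the source of the dependence of $A$ and $B$ on $R$.
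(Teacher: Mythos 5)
The half of your argument that you actually carry out is sound and coincides with the paper's estimate of its quantity $N_1$: a Brun--Hooley count $\ll 2^{\omega(m)}x/(\varphi(m)\log x)$ (Lemma \ref{lem3.3}), the multinomial bound, and $\sum_{p\le x,\ p\equiv 1\pmod 4}1/p=\frac12\log\log x+O(1)$ --- except that you must let $\omega(m)$ range over $[k-O(1/\delta),\,k-1]$ rather than equal $k-1$ exactly, since the $x^{\delta}$-rough cofactor can carry up to $2/\delta$ distinct primes; this window, not the prime-power corrections, is the main place where $k\le R\log\log x$ is needed (it produces the paper's factor $1+R+\cdots+R^{120}$ in (3.6)). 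The genuine gap is the complementary case, which you correctly identify as the crux and then only gesture at. Selberg--Delange does not apply as stated: $z^{\omega(n^2+1)}$ has no Euler product over $n$; expanding $z^{\omega(n^2+1)}=\sum_{m\mid n^2+1,\ m\ \mathrm{squarefree}}(z-1)^{\omega(m)}$ forces moduli $m$ up to $x^2$, and beyond $m\approx x^{1-\e}$ the error terms $O(\rho(m))$ in $\#\{n\le x:m\mid n^2+1\}$ swamp the main term, so the factorization of the formal Dirichlet series yields no uniform coefficient extraction; supplying that rigorously is essentially the content of the cited paper of Tenenbaum \cite{Tenen}, i.e.\ assuming the lemma. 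Nor can the cofactor-near-$x^2$ problem be cured ``by arranging the split'': in your decomposition $m$ is forced to be the entire smooth complement of the rough part.

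The single missing idea, which removes both obstacles elementarily, is the paper's \emph{capped maximal smooth part}: let $a$ be the largest $y$-smooth part of $n^2+1$ (over all $y$) not exceeding $x^{1/30}$, and $b=(n^2+1)/a$. Then only three cases occur. If $a\le x^{1/60}$ and $P^-(b)>x^{1/60}$, that is your case, with $\omega(b)\le 120$. If $a\le x^{1/60}$ and $P^-(b)\le x^{1/60}$, maximality of $a$ forces $p^{\nu}\mid n^2+1$ with $p^{\nu}>x^{1/60}$ and $\nu\ge 2$, a set of size $\ll x^{1-1/120}\log\log x$. Otherwise $x^{1/60}<a\le x^{1/30}$, and one sieves only up to $q=P^+(a)$ (so Lemma \ref{lem3.3} always applies, the modulus staying $\le x^{1/30}$), accepting the weak saving $1/\log q$; this is repaired by Rankin's trick, inserting $\left(ax^{-1/60}\right)^{\delta}\ge 1$ with $\delta=C/\log q$ into the sum of $1/\varphi(a)$, together with the trivial bound $\omega(b)\le 2\log x/\log q=\eta$. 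The choice $C=120\log(R+2)+60$ gives $(R+2)^{\eta}x^{-\delta/60}=x^{-1/\log q}$, and $\sum_q x^{-1/\log q}/(q\log q)\ll 1/\log x$ restores the full saving. In particular, the smooth values of $n^2+1$ that you feared require analytic machinery are exactly the $n$ falling into the last two cases, and they are handled with no Selberg--Delange input at all.
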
	
 
\begin{proof}
For $m\geq 2$ and $y\geq2$, we define ``the $y$-smooth part'' of $m$: 
$$
d(m,y)=\max\{d|m: P^+(d)\leq y \}.
$$
For each $n\leq x$, the number $n^2+1$ can be written as the product $ab$, where
$$	
a=a(n)=\max\{d(n^2+1,y): d(n^2+1,y)\leq x^{1/30} \} 	
$$	
and $b=b(n)=(n^2+1)/a$; note that $(a,b)=1$. Now we set
$$
A_1=\{ n\leq x: a\leq x^{1/60}, P^-(b)>x^{1/60} \mbox{ and } b>1\},
$$
$$
A_2=\{ n\leq x: a\leq x^{1/60}, P^-(b)\leq x^{1/60}  \mbox{ and } b>1\},
$$
and
$$
A_3=\{ n\leq x: x^{1/60}< a \leq x^{1/30} \mbox{ and } b>1 \}.
 $$
Since the number of $n\leq x$ for which $b(n)=1$ does not exceed $x^{1/60}$, we conclude that
\begin{equation}\label{3.4}
\#\{n\leq x: \o(n^2+1)=k\} = N_1+N_2+N_3+O(x^{1/60}),	
\end{equation}
where
$$
N_i = \#\{n\in A_i: \o(n^2+1)=k \}, \quad i=1, 2, 3. 
$$
 	
First, we estimate $N_1$. In this case $1\leq \o(b)\leq 120$; by Lemma {\ref{lem3.3}}, we have 

\begin{multline}\label{3.5}
N_1 	\leq \sum_{l=k-120}^{k-1}\sum_{\substack{a\leq x^{1/60}, \, a\in \M \\ \o(a)=l }} W_a(x, x^{1/60}) \ll \sum_{l=k-120}^{k-1}\sum_{\substack{a\leq x^{1/60}, \, a\in \M \\ \o(a)=l }}\frac{x2^{\o(a)}}{\varphi(a)\log x} \\
\leq \frac{x}{\log x}\sum_{l=k-120}^{k-1}2^l\sum_{\substack{a\leq x^{1/60}, \, a\in \M \\ \o(a)=l }}\frac{1}{\varphi(a)}.
\end{multline}
Taking the logarithms of both sides of (\ref{3.3}), we get
$$
\sum_{\substack{p\leq x \\ p\equiv 1(\text{mod}\,4)}}\dfrac{1}{p} = \frac12\log\log x+O(1).
$$
Hence, for each $l\geq 0$, the inner sum in (\ref{3.5}) is at most
$$
\frac{1}{l!}\left(\sum_{p\leq x^{1/60},\, p\notequiv 3\!\!\pmod{4}}\frac{1}{\varphi(p)}+\frac{1}{\varphi(p^2)}+...  \right)^l \leq \frac{1}{l!}\left(0.5\log\log x+B_1\right)^l
$$
for some absolute constant $B_1>0$. Thus
\begin{equation}\label{3.6}
 N_1\ll \frac{x(\log\log x+B_1)^{k-1}}{(k-1)!\log x}\left(1+R+...+R^{120} \right).  
\end{equation}
 	
Now we estimate $N_2$. Let $p=P^-(b)$, and let $r$ be the largest integer such that $p^r|b$. Then, by definition of $a$, we have $ap^r>x^{1/30}$. Thus, for any $n\in A_2$ we have $p^r> x^{1/60}$, and since $p\leq x^{1/60}$, we see that $r\geq 2$. Let $\nu = \min\{u\geq 1: p^u>x^{1/60}\}$. Then $2\leq \nu\leq r$ and $p^{\nu-1}\leq x^{1/60}$. Hence, $p^\nu\leq x^{1/60}p\leq x^{1/30}$. We conclude that, for each $n\in A_2$, the number $n^2+1$ is divided by $p^\nu$, where $p$ is a prime such that $x^{1/60}<p^\nu\leq x^{1/30}$ and $\nu\geq 2$. Therefore, 
$$
N_2\leq\sum_{x^{1/60}<p^\nu\leq x^{1/30},\,\nu\geq 2}\left( \frac{2x}{p^\nu}+O(1)\right)\ll \sum_{p^\nu>x^{1/60},\,\nu\geq 2}\frac{x}{p^\nu}.
$$
Note that $p^{\nu} \geq \max\{p^2, x^{1/60}\}$, and therefore, $p^{\nu}\geq px^{1/120}$. It follows that
\begin{equation}\label{3.7}
	N_2\ll \sum_{p\leq x^{1/30}}\frac{x}{px^{1/120}} \ll x^{1-1/120}\log\log x.
\end{equation}
 	
Finally, let us consider $N_3$. Setting $q=P^+(a)$, we have $P^-(b)\geq q+1$ and 
\begin{equation}\label{3.8}
\o(b)\leq \frac{\log (x^2+1)}{\log (q+1)}\leq \frac{2\log x}{\log q}=:\eta.
\end{equation}
Lemma \ref{lem3.3} implies that
\begin{multline}\label{3.9} 
N_3 \leq \sum_{\substack{q\leq x^{1/30} \\ q\not{\equiv} 3\!\!\pmod{4}}}\sum_{k-\eta\leq l\leq k-1}	\sum_{\substack{x^{1/60}<a\leq x^{1/30} \\ P^+(a)=q, \, \o(a)=l}}W_a(x,q) \\
\ll x\sum_{\substack{q\leq x^{1/30} \\ q\not{\equiv} 3\!\!\pmod{4}}}\frac{1}{\log q}\sum_{k-\eta\leq l\leq k-1}2^l	\sum_{\substack{x^{1/60}<a\leq x^{1/30} \\ P^+(a)=q, \, \o(a)=l}}\frac{1}{\varphi(a)}.
\end{multline}
Let
\begin{equation}\label{3.10}
\d=\frac{C}{\log q},  
\end{equation}
where $C=120\log(R+2)+60$. Let us denote by $N_3^{(1)}$ and $N_3^{(2)}$ the contribution of $q\leq e^{2C}$ and of $q>e^{2C}$, respectively, to the sum in (\ref{3.9}). Since $\varphi(a)\gg\frac{a}{\log\log a}\gg\frac{x^{1/60}}{\log\log x}$, we have
\begin{equation}\label{3.11} 
N_3^{(1)}\ll x^{59/60}\log\log x\sum_{q\leq e^{2C}}\sum_{1\leq l\leq \pi(q)}2^l\sum_{\substack{a\leq x^{1/30} \\ P^+(a)=q}}1 \ll x^{59/60}(\log x)^{c_R},
\end{equation}
where $c_R>0$ depends only on $R$. Now we turn to $N_3^{(2)}$. For fixed $q$ and $l$, we see that
\begin{multline*}
S_{q,l} := \sum_{\substack{x^{1/60}<a\leq x^{1/30} \\ P^+(a)=q, \, \o(a)=l}}\frac{1}{\varphi(a)} \leq \sum_{\substack{a\leq x^{1/30} \\ P^+(a)=q, \, \o(a)=l}}\left(\frac{a}{x^{1/60}}\right)^{\delta}\frac{1}{\varphi(a)} \\
\leq x^{-\d/60}\frac{1}{(l-1)!}\left(\sum_{\substack{p<q \\ p\not{\equiv} 3\!\!\!\!\pmod{4}}}\frac{p^{\d}}{\varphi(p)}+\frac{p^{2\d}}{\varphi(p^2)}+... \right)^{l-1}\left(\frac{q^{\d}}{\varphi(q)}+\frac{q^{2\d}}{\varphi(q^2)}+... \right).
\end{multline*}
Note that $\frac{q^{\d}}{\varphi(q)}+\frac{q^{2\d}}{\varphi(q^2)}+...\ll_R 1/q$ for $q>e^{2C}$, and the sum over primes $p$ is equal to (since $e^u\leq 1+O_K(u)$ for $0\leq u\leq K$)
$$
\sum_{\substack{p<q \\ p\not{\equiv} 3\!\!\pmod{4}}}\frac{p^\d}{p}+O_R(1) 
 \leq \sum_{\substack{p<q \\ p\not{\equiv} 3\!\!\pmod{4}}}\frac{1}{p}+
 O_R\left(\d\sum_{\substack{p<q \\ p\not{\equiv} 3\!\!\pmod{4}}}\frac{\log p}{p}+1\right)\leq 0.5\log\log x+B,
$$
where $B=B(R)>0$. Thus,
$$
S_{q,l}\ll_R \frac{x^{-\d/60}}{q(l-1)!}\left(0.5\log\log x+B\right)^{l-1}
$$ 
and
\begin{multline}\label{3.12}
N_3^{(2)}\ll_R x\sum_{\substack{e^{2C}<q\leq x^{1/30} \\ q\not{\equiv} 3(\text{mod}\,4)}}\dfrac{x^{-\delta/60}}{q\log q}\sum_{k-\eta\leq l\leq k-1}\dfrac{2^l(0.5\log\log x+B)^{l-1}}{(l-1)!}\\
\ll \frac{x(\log\log x+2B)^{k-1}}{(k-1)!}\sum_{q\leq x^{1/30}}\frac{x^{-\d/60}}{q\log q}\left(1+R+...+R^{[\eta]}\right).
\end{multline}
Using the inequality
$$
1+R+\ldots+R^{[\eta]}\leq (R+2)^{\eta+1}
$$
and the fact that
$$
(R+2)^\eta x^{-\delta/60} = x^{-1/\log q},
$$
(which follows from the definitions (\ref{3.8}) and (\ref{3.10}) of $\eta$ and $\delta$ respectively), we see that the sum over primes $q$ in (\ref{3.12}) is at most
\begin{multline*} 
(R+2) \sum_{q\leq x^{1/30}}\frac{(R+2)^{\eta}x^{-\d/60}}{q\log q} \leqslant (R+2)\sum_{q\leq x^{1/2}}\frac{x^{-1/\log q}}{q\log q} \\
\leq \frac{R+2}{\log x}\sum_{j\geq 1}\sum_{q\in \left(x^{2^{-(j+1)}}, \,x^{-2^j}\right]}\frac1q \,2^{j+1}\exp(-2^j) \ll_R \frac{1}{\log x}\,.
\end{multline*} 
Substituting this into (\ref{3.12}) and taking into account (\ref{3.11}), we find that
\begin{equation}\label{3.13}
N_3 \ll_R \frac{x(\log\log x+2B)^{k-1}}{(k-1)!\log x}.
\end{equation}

Combining (\ref{3.4}), (\ref{3.6}), (\ref{3.7}), and (\ref{3.13}) we complete the proof. 
\end{proof}

\section{Proof of Theorem \ref{th1.1}}
 
Writing each $p\leq x$ in the form $p-1=ab$, where $P^+(a)\leq x^{1/40}$ and $P^-(b)>x^{1/40}$, we can rewrite the sum $\Phi(x)$ as  
$$
\Phi(x) = \sum_{\substack{a \leq x \\ P^+(a) \leq x^{1/40}}} \frac{1}{\tau(a)} \sum_{\substack{b \leq \frac{x-1}{a} \\ P^-(b) > x^{{1}/{40}} \\ ab+1 \text{ prime}}}  \frac{1}{\tau(b)}.
$$
Let $b = p_1^{\alpha_1} p_2^{\alpha_2} \ldots p_s^{\alpha_s}$ be the prime factorization of $b$. Then
$$
x^{ (\alpha_1 + \alpha_2 + \ldots + \alpha_s)/40} < b \leq x,
$$
and, hence, $\alpha_1+\ldots+\alpha_s < 40$ and
$$
\tau(b)=(\alpha_1+1) \ldots (\alpha_s+1) \leq 2^{\alpha_1+\ldots+\alpha_s} < 2^{40}.
$$
Therefore,
$$
\Phi(x) \geq 2^{-40} \sum_{\substack{a \leq x \\ P^+(a) \leq x^{1/40}}} \frac{1}{\tau(a)} \sum_{\substack{b \leq \frac{x-1}{a} \\ P^-(b) > x^{1/40} \\ ab+1  \text{ prime}}} 1 \geq 
 2^{-40} \sum_{\substack{a \leq x^{1/40} \\ a \text{ even}}} \frac{1}{\tau(a)}F_a(x).
$$
Lemma \ref{lem3.2} implies that
\begin{equation}\label{4.1}
\Phi(x) \geq \frac{c_2 \pi(x)}{\log x}\sum_{\substack{a \leq x^{1/40} \\ a \text{ even}}}  \frac{1}{\tau(a)\varphi(a)}-R_2,
\end{equation}
where $c_2>0$ and
$$
0\leq R_2 \leq \sum_{a\leq x^{1/40}}\sum_{d\leq x^{13/40}}|R(x;ad,1)| \leq 
\sum_{q\leq x^{7/20}}\tau(q)|R(x;q,1)|. 
 $$
Using the trivial bound $|R(x;q,1)|\ll x/q$ and the Cauchy-Schwarz inequality, we get
$$
R_2 \ll x^{1/2} \sum_{q \leq x^{0.35}} \frac{\tau(q)}{q^{1/2}} (|R(x; q, 1)|)^{1/2}
\leq x^{1/2} \left(\sum_{q \leq x^{0.35}} \frac{\tau^2(q)}{q}\right)^{1/2} 
\left(\sum_{q \leq x^{0.35}}  |R(x; q, 1)|\right)^{1/2}.
$$
Applying the bound
$$
\sum_{n\leq x}\frac{\tau^2(n)}{n} \asymp (\log x)^4,
$$ 
which follows from $\sum_{n\leq x}\tau^2(n)\asymp x(\log x)^3$ (see \cite{Prah}, Chapter III, Exercise 7) by partial summation and the Bombieri–Vinogradov theorem, we find that, for any fixed $A>0$,
$$
R_2 \ll_A x^{1/2}(\log x)^2 \frac{x^{1/2}}{(\log x)^{A/2}} = \frac{x}{(\log x)^{A/2-2}}.
$$
By taking $A = 24$ (say), we obtain
\begin{equation}\label{4.2}
R_2 \ll \frac{x}{(\log x)^{10}}.
\end{equation}
Now we turn to the sum
$$
T_1=\sum_{\substack{a \leq x^{1/40} \\ a \text{ even}}} \frac{1}{\tau(a) \varphi(a)} = \sum_{l \leq 0.5x^{1/40}} \frac{1}{\tau(2l)\varphi(2l)}.
$$
Since $\tau(mn) \leq \tau(m)\tau(n)$ and $\varphi(n) \leq n$, we see that
$$
T_1\gg \sum_{l \leq 0.5x^{1/40}} \frac{1}{\tau(l)l}.
$$
From (\ref{1.1}) and partial summation we have
$$
T_1\gg (\log x)^{1/2}.
$$
Combining this with (\ref{4.1}) and (\ref{4.2}), we obtain
$$
\Phi(x)\gg \frac{\pi(x)}{(\log x)^{1/2}}-O\left( \frac{x}{(\log x)^{10}}\right)\gg \frac{x}{(\log x)^{3/2}}.
$$
Now the claim follows.
 
\section{Proof of Theorem \ref{th1.2}}

We first prove the lower bound. For any $z\geq2$,
$$
F(x) = \sum_{n \leq x} \dfrac{1}{\tau(n^2+1)} = \underset{P^+(a) \leq z,\, P^-(b) > z} {\underset{ab = n^2 + 1,\,n \leq x}{\sum\sum}} \frac{1}{\tau(ab)}.
$$
Setting $z = x^{1/30}$, we have $\tau(b) \leq 2^{60}$, and now by Lemma \ref{lem3.3},
\begin{multline*}
F(x) \geq 2^{-60} \sum_{\substack{a \leq x^{1/30} \\ a \in \mathcal{M}}}  \frac{1}{\tau(a)} W_a\left(x, x^{1/30}\right)\gg 
\frac{x}{\log x} \sum_{\substack{a \leq x^{1/30} \\ a \in \mathcal{M}}}  \frac{2^{\o(a)}}{\tau(a)\varphi(a)} \\ 
 \geq  \frac{x}{\log x}\!\!\!\!\!\sum_{\substack{a \leq x^{1/30} \\ a \in \mathcal{M} \\ a  \text{ square-free}}} \!\!\!\!\!\frac{1}{a}  =  \frac{x}{\log x}\sum_{\substack{a \leq x^{1/30} \\ a \in \mathcal{M}}} \frac{1}{a}\,\sum_{\delta^2|a}\mu(\d).
 \end{multline*}
Changing the order of summation and using the fact that $\sum_{n=1}^{\infty}\frac{1}{n^2} = \frac{\pi^2}{6}$, we have
\begin{equation}\label{5.1}
 F(x)\gg \frac{x}{\log x}\sum_{\substack{\d \leq x^{1/60} \\ \d \in \mathcal{M}}}  \frac{\mu(\d)}{\d^2} \sum_{\substack{a \leq x^{1/30}\d^{-2} \\ a \in \mathcal{M} }}  \frac{1}{a} \geq \left( 2-\frac{\pi^2}{6}\right)\frac{x}{\log x}\sum_{\substack{a \leq x^{1/30} \\ a \in \mathcal{M}}} \frac{1}{a}. 
 \end{equation}
It is well-known (see \cite{Land1909}, \S 183) that for $y\geq 2$ one has
$$
\# \{a\leq y: p|a \Rightarrow p\equiv 1\,(\text{mod\,} 4)\}  \asymp \frac{y}{(\log y)^{1/2}}.
$$
By partial summation,
$$
\sum_{\substack{a\leq y \\ p|a \Rightarrow p\equiv 1\,(\text{mod\,} 4)
}}\frac{1}{a}\asymp (\log y)^{1/2}.
$$
Now the claim follows from (\ref{5.1}). 

Now we prove the upper bound. By Lemma \ref{lem3.4},
\begin{multline*}
 F(x)\leq \sum_{n\leq x}\frac{1}{2^{\o(n^2+1)}}
 =\sum_{k\leq 2\log\log x}\frac{\#\{n\leq x: \o(n^2+1) =k\}}{2^k} + O\left(\sum_{k>2\log\log x}\frac{x}{2^k}\right)\\
 \ll \frac{x}{\log x}\sum_{k=1}^{+\infty}\frac{(0.5\log\log x+0.5B)^{k-1}}{(k-1)!} + O\left(\frac{x}{\log x}\right)\ll \frac{x}{(\log x)^{1/2}}.
\end{multline*}
This concludes the proof.

\end{document}